\newtheorem{theorem}{Theorem}
\newtheorem{proposition}[theorem]{Proposition}
\newtheorem{lemma}[theorem]{Lemma}
\newtheorem{definition}[theorem]{Definition}
\newtheorem{remark}{Remark}
\newcommand{\N}{\mathbb{N}}
\newcommand{\R}{\mathbb{R}}
\newcommand{\TT}{\mathbb{T}}
\newcommand{\Z}{\mathbb{Z}}
\newcommand{\di}{\displaystyle}
\newcommand{\fonction}[5]{\begin{array}[t]{lrcl}#1 :&#2 &\longrightarrow &#3\\&#4& \longmapsto &#5 \end{array}}
\newcommand{\fonctionsansdef}[3]{\begin{array}[t]{lrcl}#1 :&#2 &\longrightarrow &#3 \end{array}}
\date{}
\begin{document}
\setcounter{tocdepth}{3}
\title[Discrete Hamiltonian systems and Helmholtz conditions]{Continuous versus discrete structures II -- Discrete Hamiltonian systems and Helmholtz conditions}
\author{Jacky Cresson$^{1,2}$ and Fr\'ed\'eric Pierret$^1$}
\address{$^1$ SYRTE, Observatoire de Paris, 77 avenue Denfert-Rochereau, 75014 Paris, France}

\begin{abstract}
We define discrete Hamiltonian systems in the framework of discrete embeddings. An explicit comparison with previous attempts is given. We then solve the discrete Helmholtz's inverse problem for the discrete calculus of variation in the Hamiltonian setting. Several applications are discussed.
\end{abstract}

\maketitle

\noindent {\tiny $^1$ SYRTE, Observatoire de Paris, 77 avenue Denfert-Rochereau, 75014 Paris, France}

\noindent {\tiny $^2$ Laboratoire de Math\'ematiques Appliqu\'ees de Pau, Universit\'e de Pau et des Pays de l'Adour,}

\noindent {\tiny  avenue de l'Universit\'e, BP 1155, 64013 Pau Cedex, France}

\tableofcontents

\textbf{\textrm{Keywords:}} Discrete Helmholtz condition; discrete calculus of variations, discrete embedding.

\section{Introduction}

In recent years, many efforts have been devoted to the definition of discrete analogue of continuous methods and objects, like variational calculus, Lagrangian and Hamiltonian systems, etc. We refer to \cite{lubisch} and \cite{marsden} for an overview of this subject. Although different points of view can be adopted, a common point of all these works is to understand in which extend a discrete object will differ from the original continuous one, as for example symmetries, first integrals and related properties. 

Following our previous work \cite{cresson-bourdin}, we continue to investigate the discrete calculus of variations in the formulation given for example in \cite{cresson-bourdin} following the classical work of \cite{marsden} and \cite{lubisch}. This formulation, which is obtained using the formalism of discrete embedding (see \cite{cp1}), has the property to mimic very closely the continuous case. As an example of this difference in the formulation, on a can compare the discrete Euler-Lagrange equation given in \cite{lubisch} and the one given in \cite{cresson-bourdin} or \cite{cp1}.

In \cite{cresson-bourdin}, we have derived the discrete Helmholtz's condition for second order difference equations, i.e. conditions under which such equations can be written as a discrete Euler-Lagrange equation. In this paper, we want to discuss under which condition a set of difference equations can be written as a discrete Hamiltonian system. This subject is studied in \cite{opri1} and \cite{opri2} where they obtain some conditions. However, the underlying framework concerning the variational integrators is not easily related the continuous one, as well as the proofs which are given. \\

In this paper, we first derive in the framework of \emph{discrete embedding}, a coherent definition of \emph{discrete Hamiltonian systems}. Comparison of our presentation with previous attempts, in particular (\cite{marsden},\cite{lall},\cite{lubisch}) will be given. Second, we derive the Helmholtz conditions in the Hamiltonian case following the usual self adjoin characterization of the differential operator associated to the Hamiltonian system as exposed for example by Santilli \cite{santilli}.

\part{Notations}

We remind several notations and results developed in \cite{cp1} in the framework of the \emph{discrete embedding}. We refer to this article for more details and proofs.\\

Let $N\in\N$ and $a,b\in \R$ with $a<b$ and let $h=(b-a)/N$. We denote by $\TT$ the subspace of $\R$ defined by $\TT=h\Z \cap [a,b]$ where $h\Z=\{hz | z\in\R \}$. The elements of $\TT$ are denotes by $t_k$ for $k=0,...,N$. We denote by $\TT^+ =\TT \setminus \{ t_N \}$, $\TT^- =\TT \setminus \{ t_0 \}$ and $\TT^{\pm} =\TT^+ \cap \TT^-$.\\

We denote by $C([a,b],\mathbb{R}^{d})$ the set of functions $x :[a,b]\rightarrow \R^d$, $d\in \N^{*}$ and by $C^{i}([a,b],\ \R^{d})$ the set of i-th differentiable functions.\\

We denote $C(\TT,\R^d )$ the set of functions with value in $\R^d $ over $\TT$ and $C_0 (\TT ,\R^d )$ the subset defined by
\begin{equation}
C_0(\TT,\R^d )=\{G\in C(\TT,\R^d),\ G_{0}=G_{N}=0\}.
\end{equation}

The discrete derivatives $\Delta $ and $\nabla$ defined over are respectively defined by the forward and backward finite differences operator with value in $C(\TT^+ ,\R^d )$ and $C(\TT^- ,\R^d )$ respectively. The discrete antiderivative is denoted by $J_{\Delta}$. \\

We define the set $L^2_{\TT} (\R^d )$ to be the discrete functions $F\in C (\TT ,\R^d )$ such that $J_{\Delta} (F\star F )$ is well defined where $\star$ denotes the discrete product over $C(\TT ,\R^d )$ defined in (\cite{cp1},$\S$. 3.3.1). \\

We have the following results (see \cite{cp1}, $\S$.5.3.2 and $\S$.6.2) : 

\begin{theorem}[Discrete integration by part formula] 
Let $F\in C(\TT ,\R^d)$ and $G\in C_0 (\TT ,\R^d )$, we have 
\begin{equation}
\left .
\begin{array}{lll}
\left [ J_{\Delta}(F\star \Delta (G)) \right ]_N & = & - \left [ J_{\Delta} (F \star (\nabla G)) \right ]_N , \\
\left [ J_{\nabla}(F\star \nabla (G)) \right ]_N & = & - \left [ J_{\nabla} (F \star (\Delta G)) \right ]_N .
\end{array}
\right .
\end{equation}
\end{theorem}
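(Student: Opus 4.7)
The plan is to reduce both identities to a discrete Abel summation combined with the boundary condition $G_0 = G_N = 0$ imposed by $G \in C_0(\TT, \R^d)$. Indeed, the classical proof of integration by parts says that passing a derivative from one factor to the other costs a sign plus a boundary term; here the boundary term is arranged to vanish, and the sign corresponds precisely to the one in the stated equality.

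Concretely, I would first unfold $[J_\Delta(F \star \Delta G)]_N$ as a finite sum of the form $\sum_{k=0}^{N-1} F_k \cdot (G_{k+1} - G_k)$ (up to the factors of $h$ absorbed into $\Delta$ and $J_\Delta$, and up to the exact indexing prescribed by the product $\star$ recalled from \cite{cp1}). Next, I would reindex the ``shifted'' piece $\sum_{k=0}^{N-1} F_k G_{k+1} = \sum_{k=1}^{N} F_{k-1} G_{k}$ in order to split the global sum into a telescoping boundary contribution $F_{N-1} G_N - F_0 G_0$ and a residual sum of the form $-\sum_{k=1}^{N-1}(F_k - F_{k-1}) G_k$. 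Using $G_0 = G_N = 0$ kills the boundary contribution, and the residual sum is, by the very definition of $\nabla$ and of $\star$ given in \cite{cp1}, exactly $-[J_\Delta(F \star \nabla G)]_N$. This establishes the first equality.

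The second equality is obtained by the same scheme, starting instead from $[J_\nabla(F \star \nabla G)]_N$ and performing the Abel rearrangement in the other direction; equivalently, it follows from the first equality by the reflection $k \mapsto N-k$, which swaps $\Delta \leftrightarrow \nabla$ and $J_\Delta \leftrightarrow J_\nabla$ while preserving the class $C_0$. In either approach the structure of the argument is identical.

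The only genuine difficulty is bookkeeping: since $\Delta G$ lives in $C(\TT^+, \R^d)$ and $\nabla G$ in $C(\TT^-, \R^d)$, the discrete product $\star$ of \cite{cp1} involves a built-in index shift to produce an object on which $J_\Delta$ (or $J_\nabla$) can act. One must be careful, when reinterpreting the residual sum after Abel summation, to match that shift exactly, so that the minus sign ends up attached to $J_\Delta(F \star \nabla G)$ and not to some variant with $F$ differentiated instead of $G$. Apart from this indexing check, the proof is a direct translation of the classical integration-by-parts argument.
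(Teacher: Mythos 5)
The paper does not actually prove this statement---it is recalled from the companion paper on discrete embeddings---so the only question is whether your argument lands on the identity as stated. Your Abel summation is the right engine and is carried out correctly: you get
\begin{equation*}
\sum_{k=0}^{N-1}F_k\,(G_{k+1}-G_k)\;=\;F_{N-1}G_N-F_0G_0\;-\;\sum_{k=1}^{N-1}(F_k-F_{k-1})\,G_k ,
\end{equation*}
and the boundary term vanishes because $G\in C_0(\TT,\R^d)$. But the residual sum carries the difference on $F$, not on $G$: up to the factor of $h$ it is $-\sum_k (\nabla F)_k\, G_k$, i.e.\ $-\left[J\bigl((\nabla F)\star G\bigr)\right]_N$, and not $-\left[J_{\Delta}\bigl(F\star(\nabla G)\bigr)\right]_N$ as you assert in your final identification. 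This is precisely the point you defer to ``bookkeeping,'' and it is not bookkeeping: no index shift hidden in the definition of $\star$ can convert the bilinear expression $(F_k-F_{k-1})G_k$ into one of the form $F_j(G_j-G_{j-1})$. A two-point check makes this concrete: take $N=2$, $G_0=G_2=0$, $G_1=g$. The left-hand side is $(F_0-F_1)g$, which equals $-(F_1-F_0)G_1$, i.e.\ the $(\nabla F)\star G$ expression; whereas $-\sum_k F_k(\nabla G)_k$ gives $-F_1g$ (summing over $k=1$) or $(F_2-F_1)g$ (summing over $k=1,2$), neither of which matches for generic $F$.

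What your computation actually proves is $\left[J_{\Delta}(F\star\Delta G)\right]_N=-\left[J_{\Delta}\bigl((\nabla F)\star G\bigr)\right]_N$ for $G\in C_0(\TT,\R^d)$, and this is exactly the form in which the formula is used throughout the paper: in the proof of the discrete Hamilton principle the term $P\star\Delta U$ is replaced by $-\nabla P\star U$, and in the computation of the adjoint operator $\Delta U\star B$ becomes $-U\star\nabla B$; in both cases the difference operator is transferred from the $C_0$ factor to the other factor. So the printed statement almost certainly contains a typo, and your Abel argument is a correct proof of the intended formula---but as a proof of the statement as literally written, the last step does not hold, and you should either state and prove the transferred-derivative version or exhibit explicitly the convention on $\star$ that you claim reconciles the two (there is none). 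The same caveat applies to the second identity and to your reflection argument $k\mapsto N-k$, which is otherwise sound.
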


\begin{lemma}
Let $F\in C(\TT,\R^d )$ such $\left [ J_{\Delta}(F\star G) \right ]_N =0$ for all $G\in C_0(\TT,\R^d )$ then $F_{k}=0$ for $k=1,\ N-1$.
\end{lemma}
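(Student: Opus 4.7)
The plan is to prove this lemma by the standard discrete analogue of the fundamental lemma of the calculus of variations: use a family of ``spike'' test functions supported at a single time node to isolate each value $F_k$ in turn.

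First, I would unfold the hypothesis $\left[J_\Delta(F\star G)\right]_N = 0$ into an explicit finite sum. By the definition of the discrete antiderivative $J_\Delta$ recalled from \cite{cp1}, evaluation at $N$ produces a Riemann-type sum of the form $h\sum_{k} (F \star G)_k$ over the appropriate subset of indices in $\TT^+$, and by the definition of the discrete product $\star$ each term is a pointwise inner product (up to a possible shift) of the coordinates of $F$ and $G$. The upshot is that the hypothesis becomes a linear equation in the $G_k$, with coefficients depending linearly on the $F_k$.

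Second, for each fixed index $j \in \{1,\ldots,N-1\}$ and each standard basis vector $e_i$ of $\R^d$ ($i=1,\ldots,d$), I would introduce the characteristic test function $G^{(j,i)} \in C(\TT,\R^d)$ defined by $G^{(j,i)}_k = \delta_{jk}\, e_i$. Since $j \neq 0, N$, we have $G^{(j,i)}_0 = G^{(j,i)}_N = 0$, so $G^{(j,i)} \in C_0(\TT,\R^d)$ and is an admissible test function. Substituting $G^{(j,i)}$ into the hypothesis collapses the sum to a single surviving term, yielding $(F_{j})_i = 0$ (or $(F_{j'})_i = 0$ for the unique $j'$ related to $j$ by the shift in $\star$, which still ranges over $\{1,\ldots,N-1\}$ as $j$ does). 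Letting $i$ vary from $1$ to $d$ gives $F_j = 0$, and letting $j$ vary from $1$ to $N-1$ gives the conclusion.

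The only real obstacle is bookkeeping around the definition of $\star$: if $\star$ involves a forward or backward shift (as is common in discrete embeddings on $\TT^+$ versus $\TT^-$), one must verify that the spike test function at node $j$ really isolates $F_j$ and that the endpoint conditions $G_0 = G_N = 0$ do not prevent us from reaching all interior indices. Both points are routine: since $j$ ranges freely over $\{1,\ldots,N-1\}$ and any shift by one unit maps this interior range to itself or to an adjacent interior range, suitable relabelling ensures every interior $F_k$ is tested.
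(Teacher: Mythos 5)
Your proof is correct and is the standard spike-test-function argument for the discrete Dubois--Raymond lemma; the paper itself defers the proof to \cite{cp1}, where essentially this same computation (unfolding $J_\Delta$ into the finite sum $h\sum_k (F\star G)_k$, killed termwise by characteristic test functions at interior nodes) is carried out. Your attention to the endpoint conditions $G_0=G_N=0$ and to a possible index shift in $\star$ covers the only delicate points.
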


\part{Discrete Hamiltonian systems}

Discrete Hamiltonian systems have already been defined by many authors as for example in (\cite{marsden,lall,lubisch}). However, these formulations do not follow the strategy of discrete embedding which is aimed as providing an explicit relation between algebraic and analytic continuous structures and there discrete analogue (see \cite{cp1}). As a consequence, we provide a self-contain derivation of discrete Hamiltonian systems in this setting. A comparison with previous approaches is given.

\section{Reminder about Hamiltonian systems}

\subsection{Hamiltonian systems}

For simplicity we consider time independant Hamiltonian and Lagrangian.

\begin{definition}[Classical Hamiltonian]
A classical Hamiltonian is a function $H : \R^d \times \R^d \rightarrow \R$ such that for $(q,p)\in C^1([a,b],\R^d) \times C^1([a,b],\R^d)$ we have the time evolution of $(q,p)$ given by the classical Hamilton's equations
\begin{equation}
\left\{
\begin{array}{l l}
\dot{q}&=\frac{\partial H(q,p)}{\partial p} \\
\dot{p}&=-\frac{\partial H(q,p)}{\partial q}
\end{array}
\right.
\label{def_hamilton}
\end{equation}
\end{definition}

A vectorial notation is obtained posing $z=(q,p)^\mathsf{T}$ and $\nabla H = (\frac{\partial H}{\partial q} , \frac{\partial H}{\partial p} )^\mathsf{T}$ where $\mathsf{T}$ denotes the transposition. The Hamilton's equations are then written as
\begin{equation}
\frac{dz}{dt} = J \cdot \nabla H,
\end{equation}
where $J = \begin{pmatrix} 0 & I_d \\ -I_d & 0 \end{pmatrix}$ with $I_d$ the identity matrix on $\R^d$ denotes the symplectic matrix.\\

We also denote by $X_H$ the associated vector field defined by 
\begin{equation}
X_H  =  \frac{\partial H}{\partial q} \partial_q -\frac{\partial H}{\partial p} \partial_p .
\end{equation} 

An important property of Hamiltonian systems is that there solutions correspond to \emph{critical points} of a given functional, i.e. follow from a \emph{variational principle}.

\begin{theorem}
The points $(q,p)\in C^1([a,b],\R^d) \times C^1([a,b],\R^d)$ satisfying Hamilton's equations are critical points of the functional
\begin{equation}
\fonction{\mathcal{L}_H}{C^1([a,b],\R^d)\times C^1([a,b],\R^d)}{\R}{(q,p)}{\mathcal{L}_H(q,p) = \di\int_{a}^{b} \di L_H (q(t),p(t),\dot{q}(t),\dot{p}(t))}
\end{equation}
where $\fonctionsansdef{L_H}{\R^n \times \R^n \times \R^n \times \R^n}{\R}$ is the Lagrangian defined by
\begin{equation}
\di L_H(x,y,v,w)=y\cdot v-H(x,y).
\end{equation}
\label{thm_hamiltonian}
\end{theorem}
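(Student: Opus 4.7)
The plan is to compute the first variation of $\mathcal{L}_H$ at a point $(q,p)$ and identify the Euler--Lagrange conditions with Hamilton's equations. Consider admissible variations of the form $(q+\varepsilon u, p+\varepsilon v)$ where $u,v \in C^1([a,b],\R^d)$, with $u(a)=u(b)=0$ (the variation of $q$ must fix the endpoints, whereas $v$ is left free because $\dot{p}$ does not appear in $L_H$). The point $(q,p)$ is critical if and only if $\frac{d}{d\varepsilon}\mathcal{L}_H(q+\varepsilon u,p+\varepsilon v)\bigr|_{\varepsilon=0}=0$ for every such pair $(u,v)$.

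Next, I would differentiate under the integral sign using the explicit form $L_H(x,y,v,w)=y\cdot v - H(x,y)$. The chain rule yields
\begin{equation}
\frac{d\mathcal{L}_H}{d\varepsilon}\bigg|_{\varepsilon=0} = \int_a^b \left[\, v\cdot \dot{q} + p\cdot \dot{u} - \frac{\partial H}{\partial q}(q,p)\cdot u - \frac{\partial H}{\partial p}(q,p)\cdot v\,\right] dt.
\end{equation}
The key manipulation is an integration by parts on the $p\cdot \dot{u}$ term; since $u(a)=u(b)=0$ the boundary contribution vanishes, producing $-\int_a^b \dot{p}\cdot u\,dt$. Regrouping,
\begin{equation}
\frac{d\mathcal{L}_H}{d\varepsilon}\bigg|_{\varepsilon=0} = \int_a^b \left[\, \left(\dot{q}-\frac{\partial H}{\partial p}(q,p)\right)\cdot v \;-\; \left(\dot{p}+\frac{\partial H}{\partial q}(q,p)\right)\cdot u\,\right] dt.
\end{equation}

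Finally, I would invoke the (continuous) fundamental lemma of the calculus of variations twice. Taking $u=0$ with $v$ arbitrary forces $\dot{q}=\partial H/\partial p$; taking $v=0$ with $u$ arbitrary (and vanishing at the endpoints) forces $\dot{p}=-\partial H/\partial q$. The converse direction (Hamiltonian solutions are critical points) is immediate by reading the same identity from right to left. The computation is entirely routine; the only point deserving attention is the asymmetry in boundary conditions between $u$ and $v$, which reflects the fact that $L_H$ is first order in $\dot{q}$ and zeroth order in $\dot{p}$, and this is precisely the structure that the later discrete analogue will have to imitate.
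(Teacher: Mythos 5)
Your proof is correct and is exactly the standard argument: first variation, integration by parts on the $p\cdot\dot{u}$ term, and the fundamental lemma, with the correct observation that only $u$ needs to vanish at the endpoints since $\dot{p}$ does not occur in $L_H$. The paper states this continuous theorem without proof, but its proof of the discrete analogue (Theorem \ref{discrete-variational-hamiltonian}) follows precisely the same scheme, so your approach matches the paper's.
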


\subsection{Lagrangian versus Hamiltonian}

As we have already a consistent discrete theory of Lagrangian system, we will used this derivation of Hamiltonian systems in order to define \emph{discrete Hamiltonian systems}. Doing so, we will see that all the objects are related by the discrete embedding procedure.\\

Let $L$ be a Lagrangian and denote by \mbox{EL} the corresponding Euler-Lagrange equation given by 
\begin{equation}
\di \frac{d}{dt} 
\left [ 
\frac{\partial L}{\partial v} (q,\dot{q} ,t ) 
\right ] 
=\di\frac{\partial L}{\partial q} (q,\dot{q} ,t) .
\label{EL}
\end{equation}
We assume that the Lagrangian is \emph{admissible}, i.e. that the map $v\rightarrow \di\frac{\partial L}{\partial v}$ is invertible for all $(q,t) \in \R^n \times \R$. As a consequence, we can introduce the \emph{moment} variable
\begin{equation}
p=\frac{\partial L}{\partial v} (q,\dot{q},t) ,
\end{equation}
in order to rewrite the Euler-Lagrange equation as a first order system of differential equations given by
\begin{equation}
\left .
\begin{array}{lll}
\dot{q} & = & g(q,p,t) ,\\
\dot{p} & = & \di\frac{\partial L}{\partial q} (q, g(q,p,t) ,t) ,
\end{array}
\right .
\label{change}
\end{equation}
where $g$ is the inverse of $\di\frac{\partial L}{\partial v}$. This change of variable will be called \emph{Legendre transform} in the following. Introducing the Hamiltonian function 
\begin{equation}
\label{ham}
H(q,p,t) = L(q,g(q,p,t),t) -pg(q,p,t) ,
\end{equation}
one proves that equation (\ref{change}) is Hamiltonian with respect to $H$.

\section{Discrete Hamiltonian systems}

There exists many way to define discrete analogue of Hamiltonian systems. In this Section, we quickly remind some classical results which can be found for example in (\cite{marsden},\cite{lubisch},\cite{lall}) in a slightly different form. 

We follow the {\it discrete embedding} formalism exposed in \cite{cp1}. Explicit comparisons with other formulations are given. In order to obtain a coherent definition of discrete Hamiltonian systems, we follow the usual continuous derivation, starting from a Lagrangian system to its Hamiltonian formulation via the Legendre transform. As we will see, all these structures can be obtained via a direct discrete embedding of the corresponding continuous structures. This property is exactly what is missed in the existing literature. 

\subsection{Discrete Lagrangian systems}

The discrete analogue of the Euler-Lagrange equation is given by (see \cite{cp1}) :
\begin{equation}
\nabla \left [ \di\frac{\partial L}{\partial v} (Q,\Delta Q ,T) \right ] =\di\frac{\partial L}{\partial q} (Q,\Delta Q ,T ).
\end{equation}
We introduce the set of variables 
\begin{equation}
P =\di \frac{\partial L}{\partial v} (Q,\Delta Q ,T) ,
\end{equation}
which are the classical \emph{moment variables} in classical mechanics. Thanks to the admissibility of $L$, we have $\Delta Q = g(P,Q,T)$. Using these variables, the discrete Euler-Lagrange equation is then equivalent to the discrete system 
\begin{equation}
\left .
\begin{array}{lll}
P & = & \di \frac{\partial L}{\partial v} (Q, g(P,Q,T) ,T) ,\\
\nabla P & = &  \di\frac{\partial L}{\partial q} (Q, g(P,Q,T) ,T ).
\end{array}
\right .
\end{equation}
The form of this discrete system is obtain from the continuous case taking care of the duality between the variable $Q$ and $P$ inducing a change between $\Delta$ and $\nabla$ for the discretisation of the derivative.\\

Introducing the \emph{Hamiltonian function} 
\begin{equation}
H(Q,P,T)=L(Q,g(P,Q,T),T) - P g(P,Q,T ) 
\end{equation}
which is exactly the \emph{discrete} form of the continuous Hamiltonian $H$ associated to $L$. 

\begin{remark}
It must be noted that the previous result holds independently of the discrete embedding which is chosen. As a consequence, we have always the \emph{same} Hamiltonian function. This is completely different from the result of Lall-West \cite{lall} where different Hamiltonian are introduced. 
\end{remark}

Using the Hamiltonian, one obtain the Hamiltonian form of the Euler-Lagrange equation :
\begin{equation}
\left .
\begin{array}{lll}
\Delta Q & = & \di\frac{\partial H}{\partial P} ,\\
\nabla P & = & -\di\frac{\partial H}{\partial Q} .
\end{array}
\right .
\end{equation}

\subsection{Discrete Hamiltonian system}

The previous derivation of a \emph{discrete} form of Hamiltonian systems using the discrete Euler-Lagrange equations leads to the following definition :
 
\begin{definition}[Discrete Hamiltonian]
A discrete Hamiltonian is a function $H : \R^d \times \R^d \rightarrow \R$ such that for $(Q,P)\in C(\TT,\R^d) \times C(\TT,\R^d)$ we have the time evolution of $(Q,P)$ given by the discrete $\Delta-$Hamilton equations (resp. $\nabla-$Hamilton equations)
\begin{align}
\left\{
\begin{array}{l l}
(\Delta Q) &=\frac{\partial H}{\partial P} ,\\
(\nabla P) &=-\frac{\partial H}{\partial Q} ,
\end{array} 
\right.
\label{def_ddisc_hamiltonian}
\end{align}
over $\TT^{\pm}$.
\end{definition}

Aa a convenient notation, in analogy with the continuous case, we introduce what we call a \emph{discrete vector field} associated to the difference equation (\ref{def_ddisc_hamiltonian}) denoted by $X_{\Delta ,\nabla ,H}$ and \emph{formally} denoted by
\begin{equation}
X_{\Delta ,\nabla, H} = \frac{\partial H}{\partial P} \partial_{\Delta ,Q} -\frac{\partial H}{\partial Q} 
\partial_{\nabla ,P} .
\end{equation}

With this notation, we have the following \emph{commutative} diagram indicating the coherence of our construction with respect to the continuous one :\\

The discrete diagram
\[\xymatrixcolsep{5pc} \xymatrixrowsep{5pc}
\xymatrix{ 
{L_h}  \ar[d]^-{\mbox{dlap}}\ar[r]^-{disc } & {H_h} \ar[d]^-{\mbox{def}} \\ 
{EL_h} \ar[r]^-{Legendre} & {X_{\Delta ,\nabla ,H}}
}
\]
where \emph{d.l.a.p} denotes the discrete least action principle, is exactly the same as the continuous one
\[\xymatrixcolsep{5pc} \xymatrixrowsep{5pc}
\xymatrix{ 
{L}  \ar[d]^-{\mbox{lap}}\ar[r]^-{disc } & {H} \ar[d]^-{\mbox{def}} \\ 
{EL} \ar[r]^-{Legendre} & {X_H}
}
\]
where \emph{l.a.p} denotes the least action principle. All the objects are related via the \emph{discrete $\Delta$-embedding} as given by
\[\xymatrixcolsep{5pc} \xymatrixrowsep{5pc}
\xymatrix{ 
{L}  \ar[d]^-{\mbox{disc}}\ar[r]^-{def } & {H} \ar[d]^-{\mbox{disc}} \\ 
{L_h} \ar[r]^-{def} & {H_h}
}
\]

It must be noted however that the previous definition of \emph{discrete Hamiltonian system} does not coincide with the \emph{discrete differential embedding} of the continuous Hamiltonian system which gives
\begin{equation}
\left\{
\begin{array}{l l}
(\Delta Q) &=\di\frac{\partial H}{\partial P} ,\\
(\Delta P) &=-\di\frac{\partial H}{\partial Q} ,
\end{array} 
\right.
\end{equation}
over $\TT^{\pm}$.
We denote by $X_{\Delta ,H}$ the associated discrete vector field
\begin{equation}
X_{\Delta ,H} = \frac{\partial H}{\partial P} \partial_{\Delta ,Q} -\frac{\partial H}{\partial Q} 
\partial_{\Delta ,P} .
\end{equation}

We will see that this phenomenon is responsible for the \emph{non-coherence} between the differential and variational discrete embedding of Hamiltonian systems. 

\subsection{Discrete Hamilton principle}

A main property of discrete Hamiltonian systems is that they are preserving an essential feature of continuous Hamiltonian systems, i.e. the variational structure. Precisely, let $\mathcal{L}_{\Delta,H} (Q,P)$ be the discrete functional obtained by the $\Delta$-discrete embedding of the continuous one and defined by
\begin{equation}
\mathcal{L}_{\Delta,H} (Q,P) = \left [ J_{\Delta} ( P\star \Delta Q - H(Q,P) ) \right ]_N .
\end{equation}

We have :

\begin{theorem}
\label{discrete-variational-hamiltonian}
A couple $(Q,P)$ is a critical point of $\mathcal{L}_{\Delta,H} (Q,P)$ if and only if it satisfies the discrete Hamiltonian system associated to $H$.
\end{theorem}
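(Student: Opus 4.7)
The plan is to compute the first variation of $\mathcal{L}_{\Delta,H}$ with respect to independent variations $U,V\in C_0(\TT,\R^d)$ of $Q$ and $P$, transfer the discrete derivative off of $\Delta U$ by the discrete integration by parts formula stated in the Notations part, and invoke the fundamental lemma to read off the two Hamilton equations.

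Concretely, for $(U,V)\in C_0(\TT,\R^d)\times C_0(\TT,\R^d)$, linearity of $J_\Delta$, $\Delta$ and $\star$ together with the chain rule yield the G\^ateaux derivative
$$\frac{d}{d\varepsilon}\bigg|_{\varepsilon=0}\mathcal{L}_{\Delta,H}(Q+\varepsilon U,P+\varepsilon V)=\left[J_\Delta\!\left(V\star\Delta Q+P\star\Delta U-\frac{\partial H}{\partial Q}(Q,P)\cdot U-\frac{\partial H}{\partial P}(Q,P)\cdot V\right)\right]_N.$$
Applying the discrete integration by parts formula to $J_\Delta(P\star\Delta U)$, with no boundary contribution since $U\in C_0(\TT,\R^d)$, one rewrites this term as a pairing of $U$ against $\nabla P$. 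Regrouping the coefficients of $V$ and $U$ then casts the first variation as
$$\left[J_\Delta\!\left(\Bigl(\Delta Q-\frac{\partial H}{\partial P}\Bigr)\star V-\Bigl(\nabla P+\frac{\partial H}{\partial Q}\Bigr)\star U\right)\right]_N.$$

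For the direct implication, setting this expression to zero first for $U=0$ with $V$ arbitrary in $C_0$, and then for $V=0$ with $U$ arbitrary in $C_0$, the fundamental lemma recalled in the Notations part forces, for $k=1,\dots,N-1$, both $\Delta Q=\partial H/\partial P$ and $\nabla P=-\partial H/\partial Q$, which are exactly the discrete $\Delta$-$\nabla$ Hamilton equations on $\TT^\pm$. The converse direction is immediate by reversing the computation: if $(Q,P)$ solves the discrete Hamiltonian system, then both bracketed coefficients vanish identically, hence so does the first variation for every admissible $(U,V)$. The only delicate point is the bookkeeping in the discrete integration by parts: the forward difference $\Delta$ acting on the variation $U$ converts into a backward difference $\nabla$ acting on $P$, which is precisely the mechanism that produces the mixed $(\Delta Q,\nabla P)$ shape of the discrete Hamilton system and accounts for its mismatch with the naive $\Delta$-differential embedding singled out just above the theorem.
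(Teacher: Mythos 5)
Your proposal is correct and follows essentially the same route as the paper: compute the first variation of $\mathcal{L}_{\Delta,H}$, apply the discrete integration by parts formula to move the $\Delta$ off the variation $U$ (turning $P\star\Delta U$ into $-\nabla P\star U$), regroup, and invoke the discrete Dubois--Raymond lemma to obtain $\Delta Q=\partial H/\partial P$ and $\nabla P=-\partial H/\partial Q$ on $\TT^{\pm}$. Your version is in fact slightly more complete, since you state the (trivial) converse explicitly and you write the final equations correctly, whereas the paper's displayed conclusion transposes $\partial H/\partial Q$ and $\partial H/\partial P$.
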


The previous result complete the picture of our construction leading to a full commutative diagram between all the object  and given by

\[\xymatrixcolsep{5pc} \xymatrixrowsep{5pc}
\xymatrix{ {\mathcal{L}}  \ar[d]^-{\mbox{lap}}\ar[r]^-{disc } & {\mathcal{L}_{\Delta ,h}} \ar[d]^-{\mbox{discrete l.a.p}} \\ {X_H} \ar[r]^-{def} & {X_{\Delta ,\nabla,H}}}
\]

However, the discrete embedding formalism can be used to obtain a different definition of Hamiltonian system using the differential discrete $\Delta$-embedding. In this case, we obtain a non-commutative diagram :

\[\xymatrixcolsep{5pc} \xymatrixrowsep{5pc}
\xymatrix{ {\mathcal{L}}  \ar[d]^-{\mbox{lap}}\ar[r]^-{disc } & {\mathcal{L}_{\Delta ,h}} \ar[d]^-{\mbox{discrete l.a.p}} \\ {X_H} \ar[r]^-{disc} & {X_{\Delta ,H} \not= X_{\Delta ,\nabla,H}}}
\]
which is reminiscent of the non-coherence of the procedure between differential and variational discrete embedding already in the Lagrangian case (see \cite{cp1}).

\begin{proof}
The Fredchet derivative of the discrete functional $\mathcal{L}_{\Delta,H}$ is given for all $U,V\in C_0(\TT,\R)$ by
\begin{equation}
\mathcal{D}\mathcal{L}_{\Delta,H}(Q,P)(U,V)=\left [ J_{\Delta}\left(P\star\Delta U + V\star \Delta Q - \frac{\partial H(Q,P)}{\partial Q}\star U - \frac{\partial H(Q,P)}{\partial P}\star V\right) \right ]_N .
\end{equation}
As $U\in C_0(\TT,\R)$, we obtain using the \emph{discrete integration by parts formula} :
\begin{equation}
\mathcal{D}\mathcal{L}_{\Delta,H}(Q,P)(U,V)=
\left [ 
J_{\Delta}
\left( 
-\nabla P\star U + V\star \Delta Q - \frac{\partial H(Q,P)}{\partial Q}\star U - \frac{\partial H(Q,P)}{\partial P}\star V
\right)
\right ]_N
.
\end{equation}
which can be rewritten as
\begin{equation}
\mathcal{D}\mathcal{L}_{\Delta,H}(Q,P)(U,V)=
\left [ 
J_{\nabla}
\left( 
V\star \left(\Delta Q - \frac{\partial H(Q,P)}{\partial P}\right) - U\star\left( \nabla P + \frac{\partial H(Q,P)}{\partial Q}\right)
\right)
\right ]_N
.
\end{equation}
By definition, a critical point of $\mathcal{L}_{\Delta,H}$ satisfies $\mathcal{D}\mathcal{L}_{\Delta,H}(Q,P)(U,V)=0$ for all $U,V\in C_0(\TT,\R)$. Using the \emph{discrete Dubois-Raymond lemma} we deduce that
\begin{equation}
\left\{
\begin{array}{lll}
(\Delta Q) & = & \frac{\partial H}{\partial Q} , \\
(\nabla P) & = & -\frac{\partial H}{\partial P} ,
\end{array} 
\right .
\end{equation}
over $\TT^{\pm}$.
\end{proof}

\subsection{Comparison with West and al. and Opris and al. approaches}

\subsubsection{West and al. approach}

In \cite{marsden} and \cite{lall}, the authors introduce a definition of \emph{discrete Hamiltonian systems}. The second paper uses the discrete Lagrangian formalism developed in \cite{marsden}. 

The main difference between these approaches and our result is that our definitions are coherent with the formalism of \emph{discrete embedding} developed in \cite{cp1}. This remark has strong consequences. In particular, in the discrete embedding formalism there is no changes between the Hamiltonian or Lagrangian defining the continuous system and the discrete analogue. The discretisation preserves all the relations between the different objects (Lagrangian, Hamiltonian, Legendre transform) globally leading to the previous commutative diagrams. In the contrary, in \cite{marsden} or \cite{lall}, the discrete Lagrangian associated to a continuous one change the Lagrangian function (see \cite{marsden},p.363 or \cite{lubisch}, p.192-195). Moreover, it breaks the differential form of the Euler-Lagrange equation (see $\S$.8.3.5 in \cite{cp1}). 

The same is true in the Hamiltonian case. The authors introduce discrete Hamiltonian functions which do not coincide with the discrete functions associated with the continuous Hamiltonian function (see \cite{lall},p.5512-5513 and \cite{cp1}, $\S$.3.1). As a consequence, they are able to show an \emph{analogy} between the continuous and the discrete objects (see \cite{marsden},Part I,$\S$.1.6) but miss the fact that this is more than an analogy as proved by the previous diagrams. Using discrete embeddings (more precisely the variational one) all the structures and objects are exactly transported in the discrete framework. 

\subsubsection{Opris and al. approach}
\label{compardef}
In (\cite{opri2},$\S$.5), the authors give a definition of discrete Hamiltonian systems. This definition is made independently of any discretisation procedure. As a consequence, an explicit comparison of their approach is difficult. However, one can find some similarities in the definitions even if the global picture given by our framework is different. Indeed, the starting point is a discrete Legendre transform with respect to a discrete Lagrangian. As a consequence, they obtain a Hamiltonian function which has the same form as in the continuous case (see \cite{opri2}, formula (27) p.26) and a form of the discrete Hamiltonian equations which are similar to ours (see \cite{opri2}, formula (32) p.26). However, due to the framework used by the authors, the comparison to the continuous case is very difficult contrary to our derivation of discrete Hamiltonian systems.

\part{Discrete Helmholtz conditions}

In this Section, we solve the inverse problem of the discrete calculus of variations in the Hamiltonian case. We first recall the usual way to derive the Helmholtz conditions following the presentation made by Santilli \cite{santilli}. We have two main derivations : 
\begin{itemize}
\item One is related to the characterization of Hamiltonian systems via the \emph{symplectic two-differential form} and the fact that by duality the associated one-differential form to a Hamiltonian vector field is closed. Such conditions are called \emph{integrability conditions}.

\item The second one use the characterization of Hamiltonian systems via the self adjointness of the Frechet derivative associated to the differential operator associated to the equation. These conditions are usually called \emph{Helmholtz conditions}.
\end{itemize}
Of course, we have coincidence of the two procedures in the usual case. In the discrete case however, it seems that the second one is more appropriate to a generalization, avoiding the definition of \emph{discrete differential forms} as done for example by E.L. Mansfeld and P.E. Hydon (\cite{hydon1},\cite{hydon2}) or Z. Bartosiewicz and al. \cite{barto} in the time-scale setting. As a consequence, we follow the second way to obtain the discrete analogue of the Helmholtz conditions.

\section{Hemlholtz conditions for Hamiltonian systems}

\subsection{Symplectic scalar product}

In this Section we work on $\R^{2d},d \ge 1, d \in \N$. The \emph{symplectic scalar product} $\langle \cdot,\cdot \rangle_J$ is defined for all $X,Y \in \R^{2d}$ by
\begin{equation}
\langle X,Y\rangle_J = \langle X,JY\rangle ,
\end{equation}
where $\langle \cdot,\cdot \rangle$ denotes the usual scalar product and $J = \begin{pmatrix} 0 & I_d \\ -I_d & 0 \end{pmatrix}$ with $I_d$ the identity matrix on $\R^d$. 

We also consider the $L^2$ symplectic scalar product induced by $\langle \cdot,\cdot \rangle_J$ defined for $f,g \in C^1([a,b],\R^{2d})$ by 
\begin{equation}
\langle f,g \rangle_{L^2,J}=\di\int_{a}^{b} \langle f(t),g(t) \rangle_Jdt \ .
\end{equation}

\subsection{Adjoin of a differential operator}

In the following, we consider first order differential equations of the form
\begin{equation}
\label{equagen}
\frac{d}{dt}\begin{pmatrix}q \\ p \end{pmatrix} = \begin{pmatrix} X_q(q,p) \\ X_p(q,p) \end{pmatrix}.
\end{equation}
The associated differential operator is written as
\begin{equation}
O^{a,b}_X(q,p) = \begin{pmatrix} \dot{q} - X_q(q,p) \\ \dot{p} - X_p(q,p) \end{pmatrix} \ . 
\label{operatorO}
\end{equation}

A \emph{natural} notion of adjoin for a differential operator is then defined.

\begin{definition}
Let $\fonctionsansdef{A}{C^1([a,b],\R^{2n})}{C^1([a,b],\R^{2n})}$. We define the adjoin $A^*_J$ of $A$ with respect to $<\cdot,\cdot>_{L^2,J}$ by
\begin{equation}
%<A \cdot f,J\cdot g> = <J \cdot f,A^{*}_J\cdot g> \ .
<A \cdot f, g>_{L^2,J} = <A^{*}_J \cdot g, f>_{L^2,J} \ .
\end{equation}
\end{definition}

An operator $A$ will be called \emph{self-adjoin} if $A=A^{*}_J$ with respect to the $L^2$ symplectic scalar product.

\subsection{Hamiltonian Helmholtz conditions}

The Helmholtz's conditions in the Hamiltonian case are given by (see \cite{santilli}, Theorem. 3.12.1, p.176-177) :

\begin{theorem}[Hamiltonian Helmholtz conditions]
Let $X(q,p)^\mathsf{T}= (X_q(q,p) , X_p(q,p) )$ be a vector field. The differential equation (\ref{equagen}) is Hamiltonian if and only the associated differential operator $O^{a,b}_X$ given by (\ref{operatorO}) has a  self adjoin Frechet derivative with respect to the symplectic scalar product. 

In this case the Hamiltonian is given by 
\begin{equation}
H(q,p)=\int_{0}^{1}\left[p \cdot X_q(\lambda q, \lambda p) - q\cdot X_p(\lambda q, \lambda p) \right]d\lambda
\end{equation}
\end{theorem}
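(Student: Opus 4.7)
The plan is to establish the theorem in three steps: (i) compute the Frechet derivative of $O^{a,b}_X$ and characterize its symplectic self-adjointness as a set of pointwise conditions on the Jacobians of $X_q$ and $X_p$; (ii) recognize these conditions as the closedness of a natural one-form and invoke Poincar\'e's lemma; (iii) write down the primitive via the homotopy operator to recover the explicit formula stated.

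For step (i), I first compute
\begin{equation*}
DO^{a,b}_X(q,p)(u,v) = \begin{pmatrix} \dot u - \partial_q X_q\, u - \partial_p X_q\, v \\ \dot v - \partial_q X_p\, u - \partial_p X_p\, v \end{pmatrix},
\end{equation*}
for perturbations $(u,v) \in C^1([a,b],\R^d)^2$, the partials being evaluated at $(q(t),p(t))$. I then pair this against a test field $(u',v')$ via $\langle \cdot,\cdot\rangle_{L^2,J}$, integrate by parts in the $\dot u$ and $\dot v$ terms (restricting to variations vanishing at the endpoints so the boundary contributions drop), and compare with the expression obtained by swapping $(u,v)\leftrightarrow(u',v')$. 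Since the test fields are arbitrary, self-adjointness reduces to the pointwise matrix conditions
\begin{equation*}
\partial_p X_q = (\partial_p X_q)^\mathsf{T}, \qquad \partial_q X_p = (\partial_q X_p)^\mathsf{T}, \qquad \partial_q X_q + (\partial_p X_p)^\mathsf{T} = 0.
\end{equation*}

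For step (ii), these three conditions are exactly the closedness of the one-form $\omega = X_q\cdot dp - X_p\cdot dq$ on $\R^{2d}$. Since $\R^{2d}$ is star-shaped, Poincar\'e's lemma yields a function $H$ with $dH = \omega$, i.e. $\partial_p H = X_q$ and $\partial_q H = -X_p$, so that (\ref{equagen}) is the Hamiltonian system associated with $H$. Conversely, if the system is Hamiltonian for some $H$, the three conditions follow at once from Schwarz's theorem on the second partials of $H$, and hence $DO^{a,b}_X$ is self-adjoint. For step (iii), I apply the Poincar\'e homotopy operator to $\omega$ along the segment from $0$ to $(q,p)$, which produces precisely
\begin{equation*}
H(q,p) = \int_0^1 \left[ p\cdot X_q(\lambda q, \lambda p) - q\cdot X_p(\lambda q, \lambda p)\right]\, d\lambda;
\end{equation*}
verification that $\partial_p H = X_q$ and $\partial_q H = -X_p$ is a direct differentiation under the integral sign, using the symmetries of step (i) to recognize the integrand as $\tfrac{d}{d\lambda}[\lambda X_q(\lambda q,\lambda p)]$, respectively $-\tfrac{d}{d\lambda}[\lambda X_p(\lambda q,\lambda p)]$.

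The main obstacle will be the algebraic bookkeeping in step (i): the integration by parts of the symplectic pairing produces several cross terms, and because $\langle\cdot,\cdot\rangle_J$ is antisymmetric one must track the transpositions of the Jacobian blocks very carefully to land on the correct pointwise conditions rather than a formally similar but incorrect variant. Once these conditions are isolated in their proper transposed form, steps (ii) and (iii) are routine applications of the Poincar\'e lemma and direct verification under the integral sign.
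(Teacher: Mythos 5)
Your proof is correct, but note that the paper itself gives no proof of this continuous statement (it is quoted from Santilli, Thm.~3.12.1); the nearest argument actually written out is the proof of the discrete analogue, Theorem \ref{main1}, in Section \ref{proofmain1}, so that is the natural point of comparison. Your step (i) is exactly the paper's route: pair $DO^{a,b}_X$ against test fields through $\langle\cdot,\cdot\rangle_{L^2,J}$, integrate by parts on variations vanishing at the endpoints, and read off the three pointwise conditions $\partial_p X_q$ and $\partial_q X_p$ symmetric and $\partial_q X_q+(\partial_p X_p)^{\mathsf{T}}=0$, which agree with the paper's ``Integrability conditions'' proposition. Where you diverge is in the reconstruction of $H$: you pass to the closed one-form $\omega=X_q\cdot dp-X_p\cdot dq$ and invoke the Poincar\'e lemma with its homotopy operator, whereas the paper deliberately avoids differential forms (this is its stated reason for preferring the self-adjointness formulation in the discrete setting) and instead defines $H$ by the same homotopy integral, inserts it into the action $\mathcal{L}_{\Delta,H}=[J_\Delta(P\star\Delta Q-H)]_N$, and shows via the Dubois--Raymond lemma that the critical points are exactly the solutions of the given system. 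The two routes share the same computational core --- your verification that the integrand is $\tfrac{d}{d\lambda}[\lambda X_q(\lambda q,\lambda p)]$ is precisely the paper's key identity for $DH(Q,P)(U,V)=V\star X_Q-U\star X_P$ --- but your differential-forms packaging is cleaner in the continuous case, while the paper's functional argument is the one that transfers to the discrete setting where no notion of discrete one-form is available. Your closing warning about tracking transpositions in the symplectic pairing is well placed; that is indeed where the argument is most error-prone, and your stated conditions come out in the correct transposed form.
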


The conditions for the adjointness of the differential operator can be made \emph{explicit}. They coincide with the \emph{integrability conditions} characterizing the exactness of the one-form associated to the vector field by duality (see \cite{santilli}, Thm.2.7.3 p.88).

\begin{theorem}[Integrability conditions]
Let $X(q,p)^\mathsf{T}= (X_q(q,p) , X_p(q,p) )$ be a vector field. The differential operator $O^{a,b}_X$ given by (\ref{operatorO}) has a  self adjoin Frechet derivative with respect to the symplectic scalar product if and only if 
\begin{equation}
\frac{\partial X_q}{\partial q} + \left(\frac{\partial X_p}{\partial p} \right)^\mathsf{T} = 0, \quad \frac{\partial X_q}{\partial p} \ \text{and} \ \frac{\partial X_p}{\partial q} \ \text{are symmetric} .
\end{equation}
\end{theorem}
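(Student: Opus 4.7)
My plan is to derive the Hamiltonian Helmholtz conditions by computing the Fréchet derivative of $O^{a,b}_X$ explicitly, writing out both sides of the self-adjointness identity against the symplectic $L^2$ pairing, using integration by parts to eliminate the $\dot{q}, \dot{p}$ terms, and then invoking the fundamental lemma of the calculus of variations to obtain pointwise conditions on the partial derivatives of $X_q, X_p$.

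First, I would compute
\[
DO^{a,b}_X(q,p)(u,v) \;=\; \begin{pmatrix} \dot u - \tfrac{\partial X_q}{\partial q} u - \tfrac{\partial X_q}{\partial p} v \\ \dot v - \tfrac{\partial X_p}{\partial q} u - \tfrac{\partial X_p}{\partial p} v \end{pmatrix}.
\]
Because $\langle X, Y\rangle_J = X_1\!\cdot\! Y_2 - X_2\!\cdot\! Y_1$, the self-adjointness condition
$\langle DO\cdot(u,v),(u',v')\rangle_{L^2,J} = \langle DO\cdot(u',v'),(u,v)\rangle_{L^2,J}$
expands into an integral over $[a,b]$ of an expression that splits naturally into (i) pure derivative terms of the form $\dot u v' - \dot v u' - \dot u' v + \dot v' u$, and (ii) algebraic terms bilinear in the four test functions $u,v,u',v'$ with coefficients depending on the four Jacobian matrices $\partial_q X_q, \partial_p X_q, \partial_q X_p, \partial_p X_p$.

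Next I would treat block (i): integration by parts (the test variations being taken to vanish at the boundary, so that boundary terms drop) shows that the derivative terms cancel identically. This is the step that makes the problem reduce to a pointwise algebraic condition. For block (ii), since $u,v,u',v'$ are independent $\R^d$-valued test functions, the integrand must vanish identically, and one may collect monomials by type. Writing $A = \partial_q X_q$, $B = \partial_p X_q$, $C = \partial_q X_p$, $D = \partial_p X_p$ and using index notation, the monomials $u_a u'_b$ yield $C_{ab} = C_{ba}$ (i.e.\ $\partial X_p/\partial q$ symmetric), the monomials $v_a v'_b$ yield $B_{ab} = B_{ba}$ (i.e.\ $\partial X_q/\partial p$ symmetric), and the mixed monomials $u_a v'_b$ (equivalently $u'_a v_b$) yield $A_{ba} + D_{ab} = 0$, i.e.\ $A + D^{\mathsf T} = 0$, which is the stated relation $\partial X_q/\partial q + (\partial X_p/\partial p)^{\mathsf T} = 0$.

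The main obstacle is purely bookkeeping: keeping the transposes straight when multiple Jacobians $d\times d$ blocks act on different halves of a $2d$-vector and are paired through the antisymmetric $J$. The index-notation approach above sidesteps sign and transpose mistakes, and the converse direction is immediate because each condition recovered is exactly one of the coefficient identities needed to annihilate the algebraic part, while block (i) cancels regardless.
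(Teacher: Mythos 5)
Your proposal is correct: the Fr\'echet derivative, the expansion of $\langle DO\,(u,v),(u',v')\rangle_{L^2,J}-\langle DO\,(u',v'),(u,v)\rangle_{L^2,J}$, the cancellation of the derivative block after integration by parts (with variations vanishing at the boundary), and the index bookkeeping giving $\partial_q X_q+(\partial_p X_p)^{\mathsf T}=0$ and the symmetry of $\partial_p X_q$ and $\partial_q X_p$ all check out. The paper itself only cites Santilli for this continuous statement, but your argument is exactly the computation the paper carries out for the discrete analogue in Proposition~\ref{main2} (compute $DO$, integrate by parts to read off $DO^*_J$, and compare), so this is essentially the same approach.
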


Of course, the first condition corresponds to the fact that Hamiltonian systems are \emph{divergence free}, i.e. that we have $\mbox{div} X =0$. 

\section{Discrete Helmholtz's conditions}

We derive the main result of the paper giving the characterization of first order difference systems which ate discrete Hamiltonian systems. 

\subsection{Discrete symplectic scalar product}

Following the continuous case, we introduce a discrete analogue of a symplectic scalar product and the notion of self-adjointness for discrete difference operators.\\

We recall that for $X,Y \in L^2_{\TT}$ the scalar product is given by for all $X,Y\in L^2_{\TT}$ by
\begin{equation}
\langle X,Y \rangle_{L^2 ,\Delta}  = \left [ J_{\Delta} \left ( X \star Y \right ) \right ]_N .
\end{equation}

The symplectic version is obtained as in the continuous case.

\begin{definition}
Let $X,Y \in C(\TT,\R^{2d})$. The symplectic $\Delta$-scalar product over $L^2_{\TT}$ is defined by
\begin{equation}
\langle X,Y \rangle_{L^2_{\TT} ,\Delta, J} = \langle X,J\cdot Y\rangle_{L^2_{\TT},\Delta }.
\end{equation}
\end{definition}

\subsection{Adjoin of a discrete finite difference operator}

We consider first order difference equations of the form
\begin{equation}
\label{equagend}
\begin{pmatrix} \Delta Q \\ \nabla P \end{pmatrix} = \begin{pmatrix} X_Q( Q,P) \\ X_P(Q,P) \end{pmatrix}.
\end{equation}
The associated finite difference operator is written as
\begin{equation}
O^{\TT}_X(Q,P) = \begin{pmatrix} \Delta{Q} - X_Q(Q,P) \\ \nabla{P} - X_P(Q,P) \end{pmatrix} \ . 
\label{operatorOd}
\end{equation}

A notion of {\it symplectic adjoin} for discrete finite differences operators can be defined :

\begin{definition}
Let $\fonctionsansdef{A}{C(\TT,\R^{2d})}{C(\TT,\R^{2d})}$ be a discrete difference operator. We define the adjoin $A^*_J$ of $A$ with respect to $\langle \cdot,\cdot \rangle_{L^2_{\TT},\Delta ,J}$ by
\begin{equation}
\langle A(X),Y \rangle_{L^2_{\TT},\Delta ,J} = \langle A^{*}_J (Y), X \rangle_{L^2_{\TT},\Delta ,J}.
\end{equation}
\label{adjointdiscret}
\end{definition}

As in the continuous case, a main role will be played by self-adjoin discrete operators.

\begin{definition}
A discrete operator $A$ is said to be self-adjoin with respect to symplectic $\Delta$-scalar product over $L^2_{\TT}$ if $A=A^{*}_J $.
\label{selfadjointdiscret}
\end{definition}

\subsection{Discrete Helmholtz conditions}

Let $X(Q,P)^\mathsf{T}= (X_Q(Q,P) , X_P(Q,P)) \label{disc_operator_dn}$ be a discrete vector field. The main result of this Section is the discrete analogue of the Hamiltonian Helmholtz conditions for discrete difference equations.

\begin{theorem}[Discrete Hamiltonian Helmholtz conditions]
\label{main1}
Let $X(Q,P)^\mathsf{T}= (X_Q(Q,P) , X_P(Q,P) )$ be a discrete vector field. The discrete difference equation (\ref{equagend}) is a discrete Hamiltonian equation if and only if the operator $O^{\TT}_X$ defined by (\ref{operatorOd}) has a self-adjoin Frechet derivative with respect to the discrete $L^2_{\TT}$ symplectic $\Delta$-scalar product. 

Moreover, in this case the Hamiltonian is given by 
\begin{equation}
\label{ham_form}
H(Q,P)=\int_{0}^{1}\left[P \star X_Q(\lambda Q, \lambda P) - Q\star X_P(\lambda Q, \lambda P) \right]d\lambda .
\end{equation}
\end{theorem}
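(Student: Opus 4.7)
The plan is to follow Santilli's classical derivation in the continuous Hamiltonian setting, translated into the discrete framework using the discrete integration by parts formula (Theorem 1) and the discrete Dubois-Raymond lemma (Lemma 2). A key observation is that even though the operators $\Delta$ and $\nabla$ are discrete, the sought-after Hamiltonian $H$ is still an ordinary function on $\R^{2d}$, so the final reconstruction step reduces to standard calculus under integrability conditions and the formula (\ref{ham_form}) can be verified essentially as in the continuous case.

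First, I would compute the Frechet derivative of $O^\TT_X$ at $(Q,P)$ applied to a test pair $(U,V)\in C_0(\TT,\R^d)\times C_0(\TT,\R^d)$. This produces
\begin{equation*}
\mathcal{D}O^\TT_X(Q,P)(U,V) = \begin{pmatrix} \Delta U - \frac{\partial X_Q}{\partial Q}\cdot U - \frac{\partial X_Q}{\partial P}\cdot V \\ \nabla V - \frac{\partial X_P}{\partial Q}\cdot U - \frac{\partial X_P}{\partial P}\cdot V \end{pmatrix}.
\end{equation*}
Then I would write both sides of the symplectic adjoint identity $\langle \mathcal{D}O^\TT_X(U,V),(U',V')\rangle_{L^2_\TT,\Delta,J} = \langle (\mathcal{D}O^\TT_X)^*_J(U',V'),(U,V)\rangle_{L^2_\TT,\Delta,J}$, expanding $\langle\cdot,J\cdot\rangle$ componentwise. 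The twist by $J$ exchanges the $Q$- and $P$-slots with a sign, which exactly matches the $\Delta/\nabla$ asymmetry of $O^\TT_X$ so that Theorem 1 can be used to move each discrete derivative from a test function onto the opposite factor without leaving boundary terms (this is where $U,V,U',V'\in C_0(\TT,\R^d)$ is crucial).

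After collecting the bilinear contributions, self-adjointness of $\mathcal{D}O^\TT_X$ for arbitrary test functions, combined with Lemma 2 applied slot by slot, yields exactly the three discrete integrability conditions
\begin{equation*}
\frac{\partial X_Q}{\partial Q} + \left(\frac{\partial X_P}{\partial P}\right)^\mathsf{T}=0, \quad \frac{\partial X_Q}{\partial P} \text{ symmetric}, \quad \frac{\partial X_P}{\partial Q} \text{ symmetric}.
\end{equation*}
These are pointwise algebraic relations on the partial derivatives of $X_Q,X_P$ with respect to their $\R^d$-valued arguments $Q,P$, not relations involving $\Delta$ or $\nabla$. Consequently, under these conditions one may define $H$ by the homotopy integral (\ref{ham_form}) and verify $\partial H/\partial P = X_Q$, $\partial H/\partial Q = -X_P$ by a standard calculus on $\R^{2d}$ argument (differentiation under the integral sign, integration by parts in $\lambda$, and Schwarz's lemma), exactly as in the continuous proof of Santilli. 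The converse direction is immediate: if $(X_Q,X_P)=(\partial H/\partial P,-\partial H/\partial Q)$ for some $H$, equality of mixed partials gives the three integrability conditions, hence self-adjointness of $\mathcal{D}O^\TT_X$.

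The main technical obstacle I anticipate is the bookkeeping of the integration-by-parts step. One must carefully combine $J_\Delta$, the shifts implicit in the discrete product $\star$, the sign produced by $J$, and the $\Delta/\nabla$ duality so that the four cross-terms pair up correctly and produce the clean symmetry and divergence-free conditions above, without any residual boundary contribution from $\TT^+$ versus $\TT^-$. Once that identification is clean, the Poincare-type reconstruction of $H$ is routine because it takes place entirely in $\R^{2d}$.
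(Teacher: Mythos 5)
Your proposal is correct and follows essentially the same route as the paper: compute the Frechet derivative of $O^{\TT}_X$, use the discrete integration by parts formula to identify its symplectic adjoint and hence the explicit integrability conditions (CH1)--(CH2), then reconstruct $H$ by the homotopy integral (\ref{ham_form}) and check via the Poincar\'e-type computation (a total $\lambda$-derivative appears after applying the integrability conditions) that $X$ is the Hamiltonian vector field of $H$, the converse being Schwarz's lemma. The only cosmetic difference is that the paper packages the final verification through the discrete action functional $\mathcal{L}_{\Delta,H}$ and the discrete Dubois--Raymond lemma rather than verifying $\partial H/\partial P = X_Q$, $\partial H/\partial Q = -X_P$ directly, but the underlying computation is identical.
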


The proof is given in Section \ref{proofmain1}.\\

We give an explicit characterization of discrete vector fields satisfying the Hamiltonian Helmholtz conditions. By coherence with the continuous case, we call them \emph{discrete integrability conditions}.

\begin{proposition}[Discrete integrability conditions]
The operator $O^{\TT}_X$ defined by (\ref{operatorOd}) has a self-adjoin Frechet derivative at $(Q,P)\in C(\TT,\R)\times C(\TT,\R)$ if and only if the conditions
\begin{align}
&\frac{\partial X_Q}{\partial Q} + \left(\frac{\partial X_P}{\partial P} \right)^\mathsf{T} = 0 \quad \text{(CH1)} \ , \\ 
&\frac{\partial X_Q}{\partial P} \ \text{and} \ \frac{\partial X_P}{\partial Q} \ \text{are symmetric} \quad \text{(CH2)},
\end{align}
are satisfied over $\TT^\pm$.
\end{proposition}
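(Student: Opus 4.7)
The strategy mirrors the continuous derivation of the integrability conditions from self-adjointness of the Frechet derivative, transported to the discrete setting using the discrete integration by parts formula and the discrete Dubois-Raymond lemma. First, compute the Frechet derivative of $O^{\TT}_X$ at $(Q,P)$ in a direction $(U,V)\in C_0(\TT,\R^d)\times C_0(\TT,\R^d)$:
\[
DO^{\TT}_X(Q,P)\cdot (U,V) = \begin{pmatrix} \Delta U - \dfrac{\partial X_Q}{\partial Q}\, U - \dfrac{\partial X_Q}{\partial P}\, V \\[1mm] \nabla V - \dfrac{\partial X_P}{\partial Q}\, U - \dfrac{\partial X_P}{\partial P}\, V \end{pmatrix},
\]
the Jacobian matrices being evaluated at $(Q(t_k),P(t_k))$ at each discrete instant.

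Next, unfold the self-adjointness relation $\langle A(U_1,V_1),(U_2,V_2)\rangle_{L^2_{\TT},\Delta,J} = \langle A(U_2,V_2),(U_1,V_1)\rangle_{L^2_{\TT},\Delta,J}$ with $A = DO^{\TT}_X(Q,P)$, for arbitrary test functions $(U_i,V_i)\in C_0(\TT,\R^d)\times C_0(\TT,\R^d)$. Since $J\cdot(U,V)^\mathsf{T}=(V,-U)^\mathsf{T}$, the symplectic $\Delta$-scalar product reads $\langle (F_1,F_2),(G_1,G_2)\rangle_{L^2_{\TT},\Delta,J}=[J_\Delta(F_1\star G_2 - F_2\star G_1)]_N$. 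Expanding the difference of the two sides and applying the discrete integration by parts formula to the terms $\Delta U_i\star V_j$, transferring $\Delta$ from $U_i$ to $V_j$ as $-\nabla V_j$ (no boundary contribution because the test functions vanish at the extremities), one checks that all pure derivative contributions cancel identically. This cancellation is the discrete counterpart of the fact that, thanks to the symplectic pairing, the first-order derivative part of $O_X$ is automatically self-adjoin.

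What remains is a purely algebraic bilinear form in $(U_1,V_1,U_2,V_2)$ involving only the four Jacobians $M_1=\partial X_Q/\partial Q$, $M_2=\partial X_Q/\partial P$, $M_3=\partial X_P/\partial Q$, $M_4=\partial X_P/\partial P$. Grouping by the type of tensor and using $(Mx)\cdot y = x\cdot(M^\mathsf{T} y)$, the $V_1\otimes V_2$ block equals $V_1^\mathsf{T}(M_2^\mathsf{T}-M_2)V_2$, the $U_1\otimes U_2$ block equals $U_1^\mathsf{T}(M_3-M_3^\mathsf{T})U_2$, and the mixed $U\otimes V$ block equals $V_2^\mathsf{T}(M_1+M_4^\mathsf{T})U_1 - V_1^\mathsf{T}(M_1+M_4^\mathsf{T})U_2$. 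Applying the discrete Dubois-Raymond lemma with test functions successively concentrated at a single interior point $t_k\in\TT^\pm$, taking $U_1=U_2=0$ to isolate $M_2 = M_2^\mathsf{T}$, then $V_1=V_2=0$ to isolate $M_3 = M_3^\mathsf{T}$, and finally $V_1=0$, $U_2=0$ to isolate $M_1+M_4^\mathsf{T}=0$, yields the three pointwise conditions (CH1) and (CH2) over $\TT^\pm$. The converse is immediate since every manipulation is reversible.

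The main obstacle is the bookkeeping in combining the symplectic structure of $J$, the duality between $\Delta$ and $\nabla$ built into $O^{\TT}_X$, and the matrix transpositions that arise whenever a Jacobian is transferred from one test function to the other via $(Mx)\cdot y = x\cdot(M^\mathsf{T} y)$. Verifying that the signs fall in such a way that the pure-derivative terms cancel exactly, rather than producing spurious residuals that would mix with the $M_i$ contributions, is the delicate step.
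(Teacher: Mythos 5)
Your proposal is correct and follows essentially the same route as the paper: compute the Frechet derivative, transfer the $\Delta$'s and $\nabla$'s via the discrete integration by parts formula (the pure derivative parts indeed cancel), transpose the Jacobians through the pairing, and conclude with the discrete Dubois-Raymond lemma. The only difference is presentational --- the paper first records the explicit adjoint operator $DO^{*}_J$ and then compares it componentwise with $DO$, whereas you impose symmetry of the bilinear form $\langle DO\,\cdot,\cdot\rangle_{L^2_{\TT},\Delta,J}$ directly and separate the $U\otimes U$, $V\otimes V$ and mixed blocks; the computations are identical.
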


The proof easily follows from the following Proposition. 

\begin{proposition}
\label{main2}
Let $U,V \in C_0(\TT,\R^{d})$. The Frechet derivative $DO(Q,P)$ of (\ref{operatorOd}) is given by
\begin{align}
DO(Q,P)(U,V) = \begin{pmatrix} \Delta U -\frac{\partial X_Q}{\partial Q} \star U -\frac{\partial X_Q }{\partial P}\star V \\ \nabla V -\frac{\partial X_P}{\partial Q} \star U -\frac{\partial X_P}{\partial P}\star V \end{pmatrix}
\end{align}
and his adjoin ${DO^*_J}(Q,P)$ with respect to the symplectic $\Delta$-scalar product is given by
\begin{align}
{DO_J^*}(Q,P)(U,V) = \begin{pmatrix} \Delta U +\left(\frac{\partial X_P}{\partial P}\right)^\mathsf{T}\star U -\left(\frac{\partial X_Q}{\partial P}\right)^\mathsf{T} \star V \\ \nabla V -\left(\frac{\partial X_P}{\partial Q}\right)^\mathsf{T}\star U +\left(\frac{\partial X_Q}{\partial Q}\right)^\mathsf{T} \star V\end{pmatrix}.
\end{align}
\end{proposition}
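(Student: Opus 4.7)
The plan is to compute the Frechet derivative by a direct Taylor expansion and then obtain the adjoint by expanding the symplectic pairing against test functions and redistributing the difference operators via the discrete integration by parts formula already stated.

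First I would compute $DO(Q,P)(U,V)$ by Taylor-expanding the two components of $O^{\TT}_X(Q+\epsilon U, P+\epsilon V)$ to first order in $\epsilon$. Linearity of $\Delta$ and $\nabla$ gives $\epsilon\Delta U$ and $\epsilon\nabla V$ in the respective components, while for the nonlinear terms the expansion
\[
X_Q(Q+\epsilon U,P+\epsilon V) = X_Q(Q,P)+\epsilon\left(\tfrac{\partial X_Q}{\partial Q}\star U + \tfrac{\partial X_Q}{\partial P}\star V\right)+O(\epsilon^2),
\]
and analogously for $X_P$, yields the stated formula for $DO(Q,P)(U,V)$ after dividing by $\epsilon$ and taking the limit.

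For the adjoint I would fix test functions $(U',V')\in C_0(\TT,\R^d)\times C_0(\TT,\R^d)$ and, writing $(A_1,A_2)=DO(Q,P)(U,V)$, expand
\[
\langle DO(Q,P)(U,V),(U',V')\rangle_{L^2_{\TT},\Delta,J} = \left[J_{\Delta}\bigl(A_1\star V' - A_2\star U'\bigr)\right]_N
\]
into six scalar terms using the explicit form of $J$. The two terms carrying $\Delta U$ and $\nabla V$ I would rewrite via the discrete integration by parts formula, which (since $U,V\in C_0$) transfers $\Delta$ off $U$ and $\nabla$ off $V$ onto the test functions $V'$ and $U'$, producing contributions involving $\Delta U'$ and $\nabla V'$ with appropriate signs. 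On each remaining bilinear Jacobian term I would apply the pointwise identity $((M\star W_1)\star W_2)_k = ((M^{\mathsf T}\star W_2)\star W_1)_k$ to swap $(U,V)$ with $(U',V')$ at the price of a transpose on the matrix $M\in\{\partial_Q X_Q,\partial_P X_Q,\partial_Q X_P,\partial_P X_P\}$.

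Regrouping the resulting expression by the factors $U$ and $V$ and matching with the canonical form $[J_{\Delta}(B_1\star V - B_2\star U)]_N$ of the symplectic pairing lets me read off the two components $B_1,B_2$ of $DO^*_J(Q,P)(U',V')$, which coincide (after the innocuous relabelling $U'\mapsto U,V'\mapsto V$) with the stated expressions. The main obstacle is sign bookkeeping: the minus sign built into the symplectic pairing, the sign introduced by each discrete integration by parts, and the asymmetric appearance of $\Delta$ in the first slot versus $\nabla$ in the second must all be tracked consistently; it is precisely these transpositions that produce the $(\,\cdot\,)^{\mathsf T}$ factors in the final formula.
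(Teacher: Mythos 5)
Your proposal is correct and follows essentially the same route as the paper: the paper also obtains $DO(Q,P)$ by direct linearization ("simple computations") and then derives the adjoint by expanding $\langle DO(Q,P)(U,V),(A,B)\rangle_{L^2_{\TT},\Delta,J}$ into its six terms, applying the discrete integration by parts formula to move $\Delta$ off $U$ and $\nabla$ off $V$ onto the test functions, transposing the Jacobian factors to swap the pairings, and reading off the two components from the regrouped expression. The sign bookkeeping you flag is indeed the only delicate point, and it works out exactly as you describe.
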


\begin{proof}
Let $U,V \in C_0(\TT,\R^{d})$ and $A,B \in C(\TT,\R^{d})$. The Frechet derivative $DO(Q,P)$ follows from simple computations and is given by 
\begin{align}
\langle DO(Q,P)(U,V),(A,B) \rangle_{L^2_{\TT}, \Delta , J}=
\left [ 
J_{\Delta}\bigg(  \right . & \Delta U \star B - \left(\frac{\partial X_Q}{\partial Q}\star U \right)\star B-\left(\frac{\partial X_Q}{\partial P}\star V\right)\star B \nonumber \\
 &- \left . 
 \nabla V \star A + \left(\frac{\partial X_P}{\partial Q}\star U \right)\star A + \left(\frac{\partial X_P}{\partial P}\star V\right)\star A \bigg) 
 \right ]_N .\nonumber 
\end{align}
Using the discrete integration by part formula, we obtain 
\begin{align}
\langle DO(Q,P)(U,V),(A,B) \rangle_{L^2_{\TT},\Delta ,J}= \left [ 
J_{\Delta}\Bigg( \right . &- U \star \left[\nabla B -\left(\frac{\partial X_P}{\partial Q}\right)^\mathsf{T}\star A +\left(\frac{\partial X_Q}{\partial Q}\right)^\mathsf{T} \star B  \right] \nonumber \\
&+\left . V \star \left[ \Delta A +\left(\frac{\partial X_P}{\partial P}\right)^\mathsf{T}\star A -\left(\frac{\partial X_Q}{\partial P}\right)^\mathsf{T} \star B \right] \Bigg) \right ]_N . \nonumber 
\end{align}
As a consequence, the symplectic discrete adjoint of $DO$ is given by
\begin{equation}
{DO_J^*}(Q,P)(A,B) = \begin{pmatrix} \Delta A +\left(\frac{\partial X_P}{\partial P}\right)^\mathsf{T}\star A -\left(\frac{\partial X_Q}{\partial P}\right)^\mathsf{T} \star B \\ \nabla B -\left(\frac{\partial X_P}{\partial Q}\right)^\mathsf{T}\star A +\left(\frac{\partial X_Q}{\partial Q}\right)^\mathsf{T} \star B\end{pmatrix}
\end{equation}
which concludes the proof.
\end{proof}

\subsection{Proof of Theorem \ref{main1}}
\label{proofmain1}

If $X$ is a discrete Hamiltonian vector field then there exist a function $H \in C(\R^d \times \R^d , \R )$ such that $X_Q=\frac{\partial H}{\partial P}$ and $X_P=-\frac{\partial H}{\partial Q}$. The discrete Hamiltonian Helmholtz conditions of order one are satisfied. Indeed, we have $\frac{\partial^2 H}{\partial P \partial Q} = \frac{\partial^2 H}{\partial Q \partial P}$ by the Schwarz lemma. \\

Reciprocally, we suppose that $X$ satisfies the discrete Hamiltonian Helmholtz conditions. We define the function $H$ as 
\begin{equation}
H(Q,P)=\int_{0}^{1}\left[ P \star X_Q(\lambda Q, \lambda P) - Q\star X_P(\lambda Q, \lambda P) \right]d\lambda
\end{equation}
with the functional 
\begin{equation}
\mathcal{L}_{\Delta,H} (Q,P) = \left [ J_{\Delta} ( P \star \Delta Q - H(Q,P) ) \right ]_N .
\end{equation} 
The Frechet derivative of $\mathcal{L}_{\Delta,H}$ at $(Q,P)$ along $U,V\in C_0(\TT,\R)$ is given by 
\begin{equation}
\mathcal{D}\mathcal{L}_{\Delta,H}(Q,P)(U,V)=
\left [
J_{\Delta}\left(P\star\Delta U + V\star \Delta Q - DH(Q,P)(U,V)\right)
\right ] .
\end{equation}
Using the discrete integration by part formula, we obtain
\begin{equation}
\mathcal{D}\mathcal{L}_{\Delta,H}(Q,P)(U,V)= \left [ J_{\Delta}\left(-\nabla P\star U +  V\star \Delta Q - DH(Q,P)(U,V)\right) 
\right ]_N .
\end{equation}
We compute $DH(Q,P)(U,V)$ over $\TT^\pm$. We denote $(\ast)=(\lambda Q, \lambda P)$. We have
\begin{align}
DH(Q,P)(U,V) &= \int_{0}^{1} \bigg( V\star X_Q (\ast) -U\star X_P(\ast) + \lambda P\star\left[ \frac{\partial X_Q(\ast)}{\partial Q}\star U + \frac{\partial X_Q(\ast)}{\partial P}\star V\right] \nonumber \\
& - \lambda Q\star\left[ \frac{\partial X_P(\ast)}{\partial Q}\star U + \frac{\partial X_P(\ast)}{\partial P}\star V\right] \bigg)d\lambda \nonumber \\
&= \int_{0}^{1} \bigg( V\star X_Q (\ast) -U\star X_P(\ast) + \lambda U\star\left[ \left(\frac{\partial X_Q(\ast)}{\partial Q}\right)^\mathsf{T} \star P - \left(\frac{\partial X_P(\ast)}{\partial Q}\right)^\mathsf{T} \star Q \right] \nonumber \\
& + \lambda V\star\left[ \left(\frac{\partial X_Q(\ast)}{\partial P}\right)^\mathsf{T} \star P - \left(\frac{\partial X_P(\ast)}{\partial P}\right)^\mathsf{T} \star Q\right] \bigg)d\lambda \nonumber.
\end{align}

Using the Hamiltonian Helmholtz conditions we obtain 
\begin{align}
DH(Q,P)(U,V) & =\int_{0}^{1} \bigg( V\star X_Q (\ast) -U\star X_P(\ast) - \lambda U\star\left[ \frac{\partial X_P(\ast)}{\partial P} \star P + \frac{\partial X_P(\ast)}{\partial Q}\star Q \right] \nonumber \\
& + \lambda V\star\left[ \frac{\partial X_Q(\ast)}{\partial P} \star P + \frac{\partial X_Q(\ast)}{\partial Q}\star Q\right] \bigg)d\lambda \nonumber.
\end{align}

Remarking that

\begin{align}
\frac{\partial X_P(\ast)}{\partial \lambda} &= \frac{\partial X_P(\ast)}{\partial P} \star P + \frac{\partial X_P(\ast)}{\partial Q}\star Q \ , \\
\frac{\partial X_Q(\ast)}{\partial \lambda} &= \frac{\partial X_Q(\ast)}{\partial P} \star P + \frac{\partial X_Q(\ast)}{\partial Q}\star Q \\
\end{align}
we deduce that
\begin{align}
DH(Q,P)(U,V) &= \int_{0}^{1} \frac{\partial}{\partial\lambda}\left( \lambda V\star X_Q(\ast) - \lambda U\star X_P(\ast)\right) d\lambda ,
\end{align}
and finally, integrating with respect to $\lambda$ :
\begin{align}
DH(Q,P)(U,V) &= V\star X_Q(Q,P) - U\star X_P(Q,P).
\end{align} 
Replacing $DH(Q,P)(U,V)$ by its expression in the differential of $\mathcal{L}_{\Delta,H}$, we obtain
\begin{align}
\mathcal{D}\mathcal{L}_{\Delta,H}(Q,P)(U,V)= \left [ J_{\Delta }\left( V\star \left(\Delta Q - X_Q(Q,P)\right) - U\star\left( \nabla P -X_P(Q,P)\right)\right) \right ]_N .
\end{align}
A critical point $(Q,P)$ satisfies $\mathcal{D}\mathcal{L}_{\Delta,H}(Q,P)(U,V)=0$ for all $U,V \in C_0 (\TT ,\R^n)$. Using the discrete Dubois-Raymond lemma we deduce that
\begin{align}
\left\{
\begin{array}{l l}
(\Delta Q) &= X_Q  ,\\
(\nabla P) &=X_P ,
\end{array}
\right.
\end{align}
over $\TT^\pm$. This concludes the proof.

\subsection{Comparison with Opris and al. results}

In \cite{opri1,opri2}, the authors prove discrete analogues of the Helmholtz's theorem. 

The result in \cite{opri1} is closer to our previous work \cite{cresson-bourdin} as they consider discrete Lagrangian systems.

In \cite{opri2}, the authors define first discrete Hamiltonian systems and discuss the Helmholtz conditions for first order difference equations (see \cite{opri2},$\S$. 6). As already noted in $\S$.\ref{compardef}, it is not clear to compare their results with the continuous case. Indeed, the objects are not related to discretisation of the continuous one. As a consequence, the problem is in one hand more general but in the other one easier to solve. Indeed, no constraints are associated with an underlying continuous structure to respect. We can remark that they use also the self-adjoin characterization of the Hamiltonian structure (see \cite{opri2},Prop.9) and not a characterization using discrete analogue of one-form and closeness.

\part{Applications}

In this Section, we study several discrete finite differences systems for which we decide it they admit a discrete Hamiltonian structure.

\section{The linear case}

Let us consider the following discrete linear system defined over $\TT^\pm$ by
\begin{equation}
\left\{
\begin{array}{l l}
\Delta{Q}&=\alpha Q + \beta P, \\
\nabla{P}&=\gamma Q + \delta P .
\end{array}
\right.
\label{ex1_eq}
\end{equation}
where $\alpha$, $\beta$, $\gamma$ and $\delta$ are constants and $Q,P\in C(\TT,\R^n)$. The Helmholtz condition (CH2) is clearly satisfied. However, the system \eqref{ex1_eq} satisfies the condition (CH1) if and only if $\alpha+\delta=0$.

As a consequence, Hamiltonian linear difference equations are of the form
\begin{equation}
\left\{
\begin{array}{l l}
\Delta{Q}&=\alpha Q + \beta P, \\
\nabla{P}&=\gamma Q -\alpha P,
\end{array}
\right.
\end{equation}
Using formula \eqref{ham_form} we compute explicitly the Hamiltonian which is given by
\begin{equation}
H(Q,P)=\frac{1}{2}\left(\beta P^2-\gamma Q^2\right) + \alpha Q\cdot P \ .
\end{equation}

\section{Newton's equation}

The Newton's equation (see \cite{arno}) is given by 
\begin{equation}
\left\{
\begin{array}{l l}
\dot{q}&=p/m, \\
\dot{p}&=-U'(q),
\end{array}
\right.
\label{ex2_eqcont}
\end{equation}
with $m\in \R^+$ and $q,p\in \R^d$. This equation possesses a natural Hamiltonian structure with a Hamiltonian given by
\begin{equation}
H(q,p)=\frac{1}{2m}p^2+U(q).
\end{equation}
Using the construction of Section 3.1, a natural discretisation is given by
\begin{equation}
\left\{
\begin{array}{l l}
\Delta{Q}&=P/m, \\
\nabla{P}&=-U'(Q),
\end{array}
\right.
\label{ex2_eq1}
\end{equation}
defined over $\TT^\pm$. The Hamiltonian Helmholtz conditions are clearly satisfied. 

\begin{remark}
It must be noted that the Hamiltonian associated to \eqref{ex2_eq1} is given by
\begin{equation}
H(Q,P)=\frac{1}{2m}P^2+U(Q),
\end{equation}
which can be recovered by formula \eqref{ham_form}.
\end{remark}
As shown in \cite{cp1} and Section 3.3, there is non-coherence between the differential and variational discrete embedding. Moreover we can detail the differences and consequences between these two approaches. Consider the $\Delta$ differential embedding of \eqref{ex2_eqcont} defined over $\TT^+$ which is given by 
\begin{equation}
\left\{
\begin{array}{l l}
\Delta{Q}&=P/m, \\
\Delta{P}&=-U'(Q),
\end{array}
\right.
\label{ex2_eq2}
\end{equation}
where $Q,P\in C(\TT,\R)$. Introducing $Z=\rho(P)$ we obtain the following equivalent system over $\TT^\pm$ given by
\begin{equation}
\left\{
\begin{array}{l l}
\Delta{Q}&=\rho(Z)/m, \\
\nabla{Z}&=-U'(Q).
\end{array}
\right.
\label{ex2_eq21}
\end{equation}
Now the problem is that the discrete field is depending on $Z$ through the shift operator $\rho$. As $\rho(Z) = Z - h \nabla(Z)$, inserting the definition of $\rho(Z)$ in $\eqref{ex2_eq21}$ we obtain
\begin{equation}
\left\{
\begin{array}{l l}
\Delta{Q}&=\frac{1}{m}\left(Z+hU'(Q)\right), \\
\nabla{Z}&=-U'(Q).
\end{array}
\right.
\label{ex2_eq22}
\end{equation}
In that case, the Helmholtz condition (CH2) is clearly satisfied. The condition (CH1) is equivalent to $U''(Q)=0$ which means that \eqref{ex2_eq22} is Hamiltonian if and only if U is linear in $Q$ which is a very restrictive condition.

\section{Linear friction}
Let us consider the continuous system with friction
\begin{equation}
\left\{
\begin{array}{l l}
\dot{q}&=p/m, \\
\dot{p}&=-\gamma p - q,
\end{array}
\right.
\label{ex3_eq}
\end{equation}
where $\gamma\in\R$ and $q,p\in \R^d$. As in the previous example, two choices are possible. The one mimicking the canonical form of the discrete Hamiltonian systems is given by
\begin{equation}
\left\{
\begin{array}{l l}
\Delta{Q}&=P/m, \\
\nabla{P}&=-\gamma P -Q,
\end{array}
\right.
\label{ex3_eq_v1}
\end{equation}
and the one corresponding to discrete differential embedding is given by
\begin{equation}
\left\{
\begin{array}{l l}
\Delta{Q}&=P/m, \\
\Delta{P}&=-\gamma P -Q .
\end{array}
\right.
\label{ex3_eq_v2}
\end{equation}

In the first case, the Hamiltonian Helmholtz condition (CH2) is clearly satisfied. The condition (CH1) is equivalent to $\gamma=0$. In other words, the system \eqref{ex3_eq_v1} has a discrete Hamiltonian structure if and only if there is no friction which is consistent with the continuous one. \\

In the second case, using the same trick as in the previous example, we obtain the following equivalent system
\begin{equation}
\left\{
\begin{array}{l l}
\Delta{Q}&=\rho(Z)/m, \\
\nabla{Z}&=-\gamma \rho(Z) - Q,
\end{array}
\right.
\label{ex3_eq_v21}
\end{equation}
where $Z=\rho(P)$. Assuming that $\gamma \neq 1/h$, we obtain
\begin{equation}
\left\{
\begin{array}{l l}
\Delta{Q}&=\frac{1}{m}\left(\frac{1}{1-h\gamma}Z+\frac{h}{1-h\gamma}Q\right), \\
\nabla{Z}&=-\frac{\gamma}{1-h\gamma}Z -\frac{1}{1-h\gamma}Q.
\end{array}
\right.
\label{ex3_eq_v22}
\end{equation}
As previously, there is only one condition to be satisfied which is (CH1). This condition is equivalent to $\gamma=\frac{h}{m}$. In other words, the system \eqref{ex3_eq_v1} has a discrete Hamiltonian structure if and only if the time step in the discretisation is equal to $m\gamma$.

\section{Modified Harmonic oscillator}

Let us consider the following discrete linear system defined over $\TT^\pm$ by
\begin{equation}
\left\{
\begin{array}{l l}
\Delta{Q}&=P + \alpha(P),\\
\nabla{P}&=Q.
\end{array}
\right.
\label{ex4_eq}
\end{equation}
where $Q,P\in C(\TT,\R^2)$ and $\alpha(Q)^\mathsf{T}=(P_2,0)$. In that case, the Helmholtz condition (CH2) is not satisfied. Indeed, $\frac{\partial X_Q}{\partial P}$ is not symmetric because $\frac{\partial \alpha(P)_1}{\partial P_2}=1$ and $\frac{\partial \alpha(P)_2}{\partial P_1}=0$. Consequently the system \eqref{ex4_eq} does not have a discrete Hamiltonian structure.

\part*{Conclusion and perspectives}

As in the Lagrangian case studied in \cite{cresson-bourdin}, one can generalize our result in the setting of the {\it time-scale calculus}. Contrary to what happens in the Lagrangian case where technical difficulties related to the composition of operators seem to cancel such a generalization, the Hamiltonian case is possible due to its linear nature in the discrete differential operators. We refer to \cite{pierret} for more details.\\

Another important task is to defined a notion of discrete differential forms adapted to our point of view in order to fully follow the continuous formulation of the integrability conditions in term of closed differential forms.\\

Last but not least, the previous approach can be used to solve the inverse problem of the calculus of variations in the stochastic setting introduced by J-M. Bismut in \cite{bismut}. In this book, he defined the notion of stochastic Hamiltonian systems and develop the corresponding stochastic calculus of variations providing all the ingredients to generalize Helmholtz's conditions to the stochastic case.
 
\bibliographystyle{plain}

\end{document}